 \newcommand{\sbgrp}[1]{\langle #1\rangle}
\newcommand{\sbmon}[1]{{#1}^*}
\newcommand{\word}[1]{{#1}^{<\omega}}
 \newcommand\sect[2]{{% we make the whole thing an ordinary symbol
  \left.\kern-\nulldelimiterspace % automatically resize the bar with \right
  #1 % the function
  \littletaller % pretend it's a little taller at normal size
  \right|_{#2} % this is the delimiter
  }}
 \newcommand{\littletaller}{\mathchoice{\vphantom{\big|}}{}{}{}}
 \DeclareMathOperator{\val}{val}
 \DeclareMathOperator{\SUBGRP}{SGRP}
 \DeclareMathOperator{\SUBMON}{SMON}
 \DeclareMathOperator{\RAT}{RAT}
 \newcommand{\N}{\mathbb{N}}
\newcommand{\Z}{\mathbb{Z}}
\newcommand{\mcG}{\mathcal{G}}
\newtheorem{Theorem}{Theorem}
\newtheorem{Proposition}[Theorem]{Proposition}
\newtheorem{Conjecture}{Conjecture}
\theoremstyle{remark}
\title{Is decidability of the Submonoid Membership Problem closed under finite extensions?}
\author{Doron Shafrir\\
\small{doron.abc@gmail.com}}
\date{}
\begin{document}
\maketitle
\begin{abstract}
    We show that the rational subset membership problem in $G$ can be reduced to the submonoid membership problem in $G{\times}H$ where $H$ is virtually Abelian. We use this to show that there is no algorithm reducing submonoid membership to a finite index subgroup uniformly for all virtually nilpotent groups. We also provide evidence towards the existence of a group $G$ with a subgroup $H<G$ of index 2, such that the submonoid membership problem is decidable in $H$ but not in $G$.
\end{abstract}
\section{Introduction}
Given a f.g.\ group $G$, we look at 3 natural decision problems. The subgroup membership problem $\SUBGRP(G)$: Given a finite $S\subseteq G$ and $g\in G$, is it true that $g\in\sbgrp{S}$? The submonoid membership problem $\SUBMON(G)$: given a finite $S\subseteq G$ and $g\in G$, is it true that $g\in\sbmon{S}$? The rational subset membership problem $\RAT(G)$: given a description of a rational subset $R\subseteq G$ and $g\in G$, is $g\in R$? 

Instead of the above uniform problems, one may fix the subgroup $H$ (or submonoid, or rational subset) of $G$, and define the decision problem for that specific subset, where the only input is $g\in G$ and we have to decide if $g\in H$. It is not known for any of the above problems whether there exists a f.g.\ group $G$ such that the uniform decision problem is undecidable, but for any fixed subset of the given type, the membership problem is decidable. 

Since any f.g.\ subgroup is a f.g.\ submonoid, and any f.g.\ submonoid is a rational subset, we have trivial reductions $\SUBGRP(G)\le_T\SUBMON(G)\le_T\RAT(G)$. There are known examples of groups where $\SUBGRP(G)$ is decidable but $\SUBMON(G)$ is not, such as $\Z\wr\Z$ \cite{lohrey2015rational} and $H_3(\Z)^n$ for large $n$ \cite{roman2023undecidability}. There are also known examples of f.g.\ groups with decidable $\SUBMON(G)$ and undecidable $\RAT(G)$: There is a fixed rational subset  $R\subset \Z_2\wr\Z^2$ with undecidable membership problem \cite{lohrey2011tilings}, but $\SUBMON(\Z_2\wr\Z^2)$ is decidable, as shown in the Bachelor Thesis \cite{felix2020} based on an unpublished draft of the author \cite{shafrir2018unpub}. Recently, Bodart showed that there is a nilpotent group of class 2 such that $\RAT(G)$ is undecidable but $\SUBMON(G)$ is decidable \cite{bodart2024membership}, using a minimality argument. Our main argument elaborates on their argument.

If $H\le G$ is any subgroup, then trivially $\SUBGRP(H)\le_T\SUBGRP(G)$ and similarly for $\SUBMON$ and $\RAT$.  If additionally $[G:H]<\infty$, then $\SUBGRP(H)\equiv_T\SUBGRP(G)$ and $\RAT(H)\equiv_T\RAT(G)$ \cite{grunschlag1999algorithms}. However, no such equivalence is known for $\SUBMON$. It is a longstanding open problem whether there exists a group $G$ and a finite index subgroup $H$ such that the $\SUBMON(H)$ is decidable but  $\SUBMON(G)$ is not. We observe that in any such an example,  $\RAT(H)$ would be undecidable since $\RAT(H)\equiv_T\RAT(G)\ge_T\SUBMON(G)$, while $\SUBMON(H)$ is decidable, so finding such example is harder than the above problem.
The purpose of the current work is to provide evidence for the following conjecture:

\begin{Conjecture}\label{conj:main}
    There is a group $G$ with a subgroup $H$ of index $2$, such that $\SUBMON(H)$ is decidable but $\SUBMON(G)$ is undecidable.
\end{Conjecture}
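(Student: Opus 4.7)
The plan is to combine the main reduction of this paper with the known examples of f.g.\ groups having decidable submonoid membership but undecidable rational subset membership. First, I would pick a f.g.\ group $G_0$ with $\SUBMON(G_0)$ decidable and $\RAT(G_0)$ undecidable; concrete candidates are $G_0=\Z_2\wr\Z^2$ (undecidable $\RAT$ by \cite{lohrey2011tilings}, decidable $\SUBMON$ by \cite{felix2020,shafrir2018unpub}) or Bodart's nilpotent class-$2$ group from \cite{bodart2024membership}. Next, I would choose a virtually abelian group $H_0$ for which the main reduction $\RAT(G_0)\le_T\SUBMON(G_0\times H_0)$ applies and which carries an abelian subgroup $A$ of index exactly $2$; the natural candidate is $H_0=\Z^n\rtimes(\Z/2)$ with $\Z/2$ acting by inversion, so that $A=\Z^n$.

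Setting $G:=G_0\times H_0$ and $H:=G_0\times A$ gives $[G:H]=2$, and by the main reduction $\SUBMON(G)$ is undecidable, since a decision procedure for it would decide $\RAT(G_0)$. It then remains to show that $\SUBMON(H)=\SUBMON(G_0\times\Z^n)$ is decidable. This last step is the main obstacle: decidability of $\SUBMON$ is not generally known to be preserved under direct products with $\Z^n$, and the corresponding preservation question under direct products with arbitrary finite groups is essentially the open problem underlying the conjecture. In particular, one cannot invoke a black-box transfer result analogous to the one available for $\RAT$ via \cite{grunschlag1999algorithms}.

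To attack this obstacle I would adapt the existing decidability proof for the specific $G_0$ to the direct product $G_0\times\Z^n$. For $G_0=\Z_2\wr\Z^2$, this means revisiting the argument of \cite{felix2020,shafrir2018unpub}, carrying an additional free-abelian coordinate alongside the lamplighter configurations and verifying that the classes of configurations the algorithm enumerates remain effectively computable once a $\Z^n$ accumulator is appended to the state. For Bodart's nilpotent example, one would extend the minimality-based analysis of \cite{bodart2024membership} to accommodate the extra abelian directions, which should be possible since the nilpotency class does not increase in $G_0\times\Z^n$. A successful execution of either adaptation, for any $n$ large enough to make the reduction produce an undecidable $\RAT$ instance, would settle Conjecture \ref{conj:main}. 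The reduction established in this paper thus reduces the conjecture to a single concrete algorithmic question about $\SUBMON$ on products with $\Z^n$, and this reduction is the evidence promised in the abstract.
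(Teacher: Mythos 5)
First, a point of framing: the statement you were asked to prove is \Cref{conj:main}, which the paper does \emph{not} prove --- it is an open conjecture, and the paper only establishes a conditional dichotomy (\Cref{thm:weirdland}): either the conjecture holds, or one lands in a world where $\RAT_{\le n}(G)$ is decidable for every $n$ while $\RAT(G)$ is not. Your proposal is honest about one gap (decidability of $\SUBMON(G_0\times\Z^n)$), but it contains two further gaps that are not acknowledged and that are precisely the reasons the paper stops at a dichotomy rather than a proof.

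The first unacknowledged gap is the claim that ``by the main reduction $\SUBMON(G_0\times H_0)$ is undecidable'' for a \emph{fixed} virtually Abelian $H_0$. \Cref{thm:mon2rat} only gives $\RAT_{\le n}(G_0)\le_T\SUBMON(G_0\times H_n)$ with $H_n$ depending on the number of states $n$; the ``Limitations'' subsection explains why no single virtually Abelian $H$ can be expected to capture all of $\RAT(G_0)$. So undecidability of $\RAT(G_0)$ does not transfer to $\SUBMON(G_0\times H_0)$ for any fixed $H_0$: you would need $\RAT_{\le n}(G_0)$ to be undecidable for some \emph{fixed} $n$, and it is entirely possible that each bounded problem is decidable (non-uniformly in $n$) while the uniform problem is not. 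This is exactly the escape hatch that produces alternative (1) of \Cref{thm:weirdland}.

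The second unacknowledged gap is the index-$2$ requirement. Your candidate $H_0=\Z^n\rtimes(\Z/2)$ with the inversion action does not satisfy \Cref{prop:xn_s_yn}: if $x^s=x^{-1}$ and $y^s=y^{-1}$, then $(xy)s(xy)=s(x^{-1}y^{-1})(xy)=s$, so words of value $s$ are not forced into the shape $x^lsy^l$, and the section construction of \Cref{prop:reduce2pair} breaks. The paper's constructions yield quotients $\Z_4$, $\Z_p$ for odd primes, or $\Z_{n+1}$ for the wreath-product step --- never index $2$ directly. To reach an index-$2$ pair one must refine a tower $N_0<N_1<\cdots<N_k$ of index-$2$ steps (taking the quotient to be a $2$-group) and argue that decidability fails at \emph{some} rung; but locating such a rung requires already knowing the top of the tower is undecidable, which returns you to the first gap. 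The combination of these two obstructions is why the paper can only conclude: either the conjecture is true, or every $\RAT_{\le n}(G)$ is decidable while $\RAT(G)$ is not. Your third, admitted gap is real as well, though for the Bodart-style choice $G_0=N_{2,m}\times\Z^r$ with $m$ minimal it is actually handled by \Cref{thm:dim_gain}; the genuinely fatal issues are the two above.
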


The main result of this paper is the following dichotomy: Either \Cref{conj:main} is true, or there is a group $G$ such that membership in any fixed rational subset $R\subseteq G$ is decidable, but $\RAT(G)$ is undecidable. In fact, stronger conclusion can be made if \Cref{conj:main}  fails, see \Cref{thm:weirdland}. Our main tools are the reduction from \cite{bodart2024membership}, and reducing rational subset membership in $G$ to submonoid membership in $G{\times}H$ where $H$ is virtually Abelian, which will be shown in the next section.

\section{Rational sections of submonoids}
We will use the notation $\Z_n=\Z/n\Z$. Given a set $A\subseteq G\times H$ and $h\in H$, we define the $h$-section of $A$ as $\sect{A}{h}=\{g\in G\mid (g,h)\in A\}$.
\subsection{Product of 2 submonoids}
We first show that any product of 2 submonoids of a group $G$ is a section of a submonoid of $G{\times}H$ where $H$ is virtually Abelian. 
\begin{Proposition}
\label{prop:xn_s_yn}
There is a virtually Abelian group $H$ and elements $x,s,y\in H$ with the following property: A word $w\in\word{\{x,s,y\}}$ (without inverses) has value $s$ iff $w=x^lsy^l$ for some $l\in\N_0$.
\end{Proposition}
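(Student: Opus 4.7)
The plan is to construct $H$ explicitly as a semidirect product and verify the property by direct calculation. I would take $H = \Z^3 \rtimes \Z_4$, where the generator $t$ of $\Z_4$ acts on $\Z^3$ by $t \cdot (a,b,c) = (-b, a, c)$: rotation by $90^\circ$ on the first two coordinates, with the third coordinate fixed. Since the action has finite order, $\Z^3$ sits as a finite-index subgroup of $H$, so $H$ is virtually Abelian. I would then set $x = ((1,0,0), e)$, $y = ((0,1,0), e)$, and $s = ((0,0,1), t)$.

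The forward direction reduces to the single identity $xsy = s$, which is a short computation using $t \cdot (0,1,0) = (-1,0,0)$; then $x^l s y^l = s$ follows by induction on $l$.

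For the converse, my strategy is to read off two independent pieces of data from the image of an arbitrary word $w$ evaluating to $s$. The third coordinate behaves as a pure counter of occurrences of $s$: since $t$ fixes the third coordinate and only $s$ contributes to it (by $+1$ each time), the third coordinate of the image of $w$ equals $\#s(w)$, forcing $\#s(w) = 1$. With a single $s$ I would write $w = w_0 s w_1$ and $w_i = x^{p_i} y^{q_i}$ (this decomposition is well-defined because $x$ and $y$ commute inside the Abelian kernel). A direct computation then yields first-two-coordinate conditions of the form $p_0 = q_1$ and $q_0 + p_1 = 0$; non-negativity forces $q_0 = p_1 = 0$, and so $w = x^l s y^l$ with $l = p_0$.

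The delicate point is the choice of an order-$4$ rotation rather than a simpler involution: an order-$2$ action (swap or negation) produces a symmetric constraint like $p_1 = q_0$, which admits spurious solutions such as $xysyx$, whereas the $90^\circ$ rotation breaks this symmetry and replaces the condition with the asymmetric $q_0 + p_1 = 0$, whose only non-negative solution is trivial. The fixed third coordinate is also essential: without it, $s$ would have order $4$ in the plain $\Z^2 \rtimes \Z_4$, and every word $x^l s y^l \cdot s^{4k}$ would also evaluate to $s$.
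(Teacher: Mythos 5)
Your construction is the paper's own group in a different presentation: your $\langle x,y,s\rangle\le\Z^3\rtimes\Z_4$ is exactly the paper's $\Z^2\rtimes_s\Z$ with $s$ acting by the order-$4$ rotation, your third coordinate plays the role of the paper's projection $H\to H/\Z^2\simeq\Z$ forcing a single occurrence of $s$, and the final computation yielding $p_0=q_1$, $q_0+p_1=0$ with non-negativity is the same normal-form argument. The proposal is correct and essentially identical to the paper's proof, down to the closing remark on why an order-$2$ action and a faithful $\Z_4$ action both fail.
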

\begin{proof}
    Recall the notation $x^s=s^{-1}xs$. Define     \[H=\langle x,s,y\mid [x,y]=1, x^s=y^{-1}, y^s=x\rangle\]
Equivalently, $H=\Z^2\rtimes_s\Z$ where $x,y$ are a basis of $\Z^2$, and $s$ acts on $\Z^2$ by $x\mapsto-y,y\mapsto x$, that is, a rotation by $\pi/2$. In particular this action has order $4$; so the action of $\Z$ defining the semi-direct product is not faithful. We have $\sbgrp{x,y,s^4}\simeq\Z^3$ and $H/\sbgrp{x,y,s^4}\simeq \Z_4$, so $H$ is $\Z^3$-by-$\Z_4$. 
    If a word $w$ has value $s$, then $s$ appears exactly once in $w$, as can be seen by projecting $H\rightarrow H/\Z^2\simeq\Z$. Therefore, the value of $w$ can be expressed as:
    \[(x^ay^b)s(x^cy^d)=s(y^{-a}x^b)(x^cy^d)=sx^{b+c}y^{d-a}=s\]
    Where $a,b,c,d\ge0$.  We obtain $a=d,b=c=0$ so $w$ has the required form.
\end{proof}
The proof of \Cref{prop:xn_s_yn} clarifies why we defined $H=\Z^2\rtimes_s\Z$  rather than with the faithful action $H=\Z^2\rtimes_s\Z_4$: in the latter we cannot guarantee $s$ appears once. 
There are many other virtually Abelian groups $H$ satisfying \Cref{prop:xn_s_yn}:
For any $n\ge 3$ we can take $H=\Z^n\rtimes_s\Z$ where $s$ is the cyclic shift, i.e.\ $x_i^s=x_{i+1}$ for $i\in\Z_n$, and $x=e_0^{-1},y=e_1$. \Cref{prop:xn_s_yn} follows since $(e_0^{-a}e_1^b)s(e_0^{-c}e_1^d)=s(e_1^{-a}e_2^b)(e_0^{-c}e_1^d)=s$ iff $a=d,b=c=0$. $H$ is $\Z^{n+1}$-by-$\Z_n$. Thus for every prime $p>2$, $H$ can be chosen Abelian-by-$\Z_p$.

The property of \Cref{prop:xn_s_yn} directly gives us products of 2 submonoids as sections:

\begin{Proposition}
\label{prop:reduce2pair}
    Let $H$ be a group satisfying \Cref{prop:xn_s_yn}.  For any group $G$ and f.g.\ submonoids $A,B\le G$, there is a f.g.\ submonoid $C\le G{\times}H$ such that $AB$ is a section of $C$.
\end{Proposition}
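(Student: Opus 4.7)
The plan is to define $C$ as the f.g.\ submonoid of $G{\times}H$ generated by the pairs $(a_i, x)$ (one for each generator $a_i$ of $A$), the pairs $(b_j, y)$ (one for each generator $b_j$ of $B$), the element $(1_G, s)$, and the two ``padding'' generators $(1_G, x)$ and $(1_G, y)$. The goal is then to verify $\sect{C}{s}=AB$, which gives the proposition.

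For the inclusion $\sect{C}{s}\subseteq AB$, I would take any $(g,s)\in C$ and fix a factorization as a product of the chosen generators. Projecting to the $H$-coordinate produces a word in $\{x,s,y\}$ (with no inverses) whose value in $H$ is $s$, so \Cref{prop:xn_s_yn} forces this word to be $x^l s y^l$ for some $l\in\N_0$. Reading back in $G{\times}H$, the chosen factorization therefore consists of an $x$-block of length $l$, followed by $(1_G,s)$, followed by a $y$-block of length $l$. The $G$-coordinate of the $x$-block is a product of $l$ elements of $\{a_1,\dots,a_m,1_G\}\subseteq A$ and hence lies in $A$, and likewise the $y$-block contributes an element of $B$, so $g\in AB$.

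For the reverse inclusion $AB\subseteq\sect{C}{s}$, write an arbitrary element as $ab$ with $a=a_{i_1}\cdots a_{i_p}$ and $b=b_{j_1}\cdots b_{j_q}$, set $l=\max(p,q)$, and pad the shorter side with copies of $(1_G,x)$ or $(1_G,y)$ so that both blocks have length exactly $l$. The resulting product in $C$ has $H$-coordinate $x^l s y^l=s$ and $G$-coordinate $ab$, yielding $(ab,s)\in C$.

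I don't anticipate any substantive obstacle: the entire argument is channeled through \Cref{prop:xn_s_yn}, which rigidly pins down the order of the $H$-letters and forces equal block lengths, while the padding generators $(1_G,x)$ and $(1_G,y)$ decouple this forced length-matching from the actual factorizations in $A$ and $B$. The only mild point to keep in mind is that both blocks are allowed to use $1_G\in A\cap B$ as a factor, which relies on the convention that $\sbmon{S}$ contains the empty product.
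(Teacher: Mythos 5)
Your proposal is correct and matches the paper's proof essentially verbatim: the paper also takes generating sets $A_0,B_0$ containing $1_G$ (your padding generators), forms $C=\bigl(A_0{\times}\{x\}\cup\{(1_G,s)\}\cup B_0{\times}\{y\}\bigr)^*$, and verifies $\sect{C}{s}=AB$ by padding in one direction and applying \Cref{prop:xn_s_yn} to the $H$-coordinate in the other. No differences worth noting.
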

\begin{proof}
    Let $A_0,B_0\subseteq G$ be finite sets such that $A^*_0=A,B^*_0=B$, WLOG $1_G\in A_0,1_G\in B_0$. Let $x,s,y\in H$ as in \Cref{prop:xn_s_yn}. Define:
    \[ C_0 = A_0{\times}\{x\} \cup \{(1_G,s)\} \cup B_0{\times}\{y\} \]
    and $C=C^*_0$. We claim that $\sect C s=AB$.  We first prove $AB\subseteq\sect C s$.
    Let  $g=a_1\cdots a_n\cdot b_1\cdots b_m\in AB$ where $a_i\in A_0,b_i\in B_0$. Since $1_G\in A_0, 1_G\in B_0$ we can assume $m=n$ by padding the shorter sequence with $1_G$. We have:
\begin{equation}\label{eq:prod_pair}
       (a_1,x)\cdots(a_n,x)(1_G,s)(b_1,y)\cdots(b_n,y)=(g,x^nsy^n)=(g,s)\in C
\end{equation}
    Proving $g\in\sect C s$. Since $g\in AB$ is arbitrary, $AB\subseteq \sect C s$. Conversely, let $g\in\sect C s$, so $(g,s)=c_1\cdots c_k$ for $c_i\in C_0$. By \Cref{prop:xn_s_yn} applied on the second coordinate, we get that $c_1\cdots c_k$ must have the form of \Cref{eq:prod_pair}, and therefore $g\in AB$.
\end{proof}

\subsection{General rational sections}
We now go from products of 2 submonoids to general rational subsets. 
We use the wreath product $\Z\wr\Z_n=\Z^n\rtimes_s\Z_n$. Define $e_i$ as the basis elements of $\Z^n$ for $i\in\Z_n$, and $s$ the shifting element, so $e_i^s=e_{i+1}$. By a slight abuse of notation, we write $s^i$ where the exponent $i$ is in $\Z_n$ rather than $\Z$, which is well-defined since $ord(s)=n$.
\begin{Proposition}\label{prop:path_words}
    Let $H=\Z\wr\Z_n$ and $t_{ij}=e_is^{j-i}\in H$ for $i,j\in\Z_n$. A word $w\in\word{\{t_{ij}\mid i,j\in\Z_n\}}$  has value  $e_0^ls\in H$ for some $l\in\N$ iff  $w=t_{i_0i_1}t_{i_1i_2}\cdots t_{i_{l-1}i_l}$ for some sequence  $(0=i_0,i_1,...,i_l=1)\in\Z_n^{l+1}$.
\end{Proposition}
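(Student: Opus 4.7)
My approach is to put an arbitrary product of generators $t_{a_1 b_1} \cdots t_{a_k b_k}$ into a normal form (all $e$'s first, then an $s$-power) and then read off what equating it with $e_0^l s$ forces. The fundamental ingredient is the commutation rule $s^r e_j = e_{j-r} s^r$, which is immediate from $e_i^s = e_{i+1}$; this is what lets me push every $s$-power rightward past every $e$-factor.

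By induction on $k$, I expect to obtain the closed form
\[
t_{a_1 b_1} t_{a_2 b_2} \cdots t_{a_k b_k} = e_{c_1} e_{c_2} \cdots e_{c_k}\, s^{r_k},
\]
where $c_j = \sum_{i=1}^{j} a_i - \sum_{i=1}^{j-1} b_i$ and $r_k = \sum_{i=1}^{k}(b_i - a_i)$, both interpreted in $\Z_n$. The inductive step is a one-line application of the commutation rule: $s^{r_k} e_{a_{k+1}} = e_{a_{k+1} - r_k} s^{r_k}$, and one checks $a_{k+1} - r_k = c_{k+1}$, matching the proposed formula at step $k+1$.

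Now equate this normal form with $e_0^l s$. Because no inverses appear in the word, each $e_{c_j}$ contributes $+1$ to the coefficient of $e_{c_j}$ in the $\Z^n$-component, so matching the $\Z^n$-parts forces every $c_j = 0$ and $k = l$. The equation $c_1 = 0$ gives $a_1 = 0$, and $c_{j+1} - c_j = a_{j+1} - b_j = 0$ gives $a_{j+1} = b_j$ for all $j \geq 1$. Setting $i_0 = 0$ and $i_j = b_j$ yields the factorization $w = t_{i_0 i_1} \cdots t_{i_{l-1} i_l}$, and matching the $s$-part $r_k \equiv 1 \pmod n$ becomes $i_l - i_0 = 1$, i.e.\ $i_l = 1$. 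The converse direction applies the same closed form in reverse, giving $t_{i_0 i_1} \cdots t_{i_{l-1} i_l} = e_{i_0}^l s^{i_l - i_0} = e_0^l s$ whenever $i_0 = 0$ and $i_l = 1$. I foresee no real obstacle; the only care required is keeping the $\Z_n$-indexing of subscripts consistent, and the key conceptual point is that the absence of inverses turns the single $\Z^n$-equality into the clean per-coordinate statement $c_j = 0$ for every $j$.
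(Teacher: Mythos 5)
Your proposal is correct and follows essentially the same route as the paper: both push all $s$-powers to the right via $s^r e_j = e_{j-r}s^r$ to reach the normal form, observe that the absence of inverses prevents cancellation in the $\Z^n$-part so each letter must contribute $e_0$, and then read off $a_1=0$, $a_{j+1}=b_j$, and $b_l=1$. Your explicit closed form for $c_j$ and $r_k$ is just a slightly more quantitative packaging of the paper's induction on prefixes.
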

If we think of $t_{ij}$ as corresponding to an edge $i\rightarrow j$, then words of the above type correspond to paths. We chose $0$ and $1$ as the starting point and end point of the path out of convenience. In general, paths $i\rightarrow j$ of length $l$ correspond to words with value $e_i^ls^{j-i}$.
\begin{proof}
    We have $t_{ij}=e_is^{j-i}=s^{-i}e_0s^j$. If  $(0=i_0,i_1,...,i_l=1)\in\Z_n^{l+1}$ we get:
\[t_{i_0i_1}\cdots t_{i_{l-1}i_l}=(s^{-i_0}e_0s^{i_1})(s^{-i_1}e_0s^{i_2})\cdots(s^{-i_{l-1}}e_0s^{i_l})=s^{-i_0}e_0^ls^{i_l}=e_0^ls\]
So any such word has the required value. Conversely, let $w$ be a word in $\word{\{t_{ij}\mid i,j\in\Z_n\}}$ with value $e_0^ls$. Each element of $H$ has a unique canonical form $e_0^{c_0}e_1^{c_1}\cdots e_{n-1}^{c_{n-1}}s^k$ for some $c_i\in\Z,k\in\Z_n$. To bring $w$ to this form, we pull all $e_i$'s to the beginning of the word using $s^je_i=e_{i-j}s^j$.  We observe that each letter of $w$ contributes one $e_i$ to the canonical form, and there are no negative powers of $e_i$ that may cause cancellation. Therefore, since  $\val(w)=e_0^ls$, we have $|w|=l$ and every $e_i$ must become $e_0$ when it reaches the beginning of the word. Set $w=t_{i_0j_0}\cdots t_{i_{l-1}j_{l-1}}$. We must have $i_0=0$ since $e_{i_0}$ is already in the beginning of $w$. We prove by induction on $k$ that $w_1...w_k=e_0^ks^{j_k}$ and  $i_k=j_{k-1}$. Assume the claim is true for $k-1$, for $k$ we get 
\[w_1\cdots w_k=(e_0^{k-1}s^{j_{k-1}})(e_{i_k}s^{j_k-i_k})=e_0^{k-1}e_{i_k-j_{k-1}}s^{j_k-(i_k-j_{k-1})}\]
Since we must have $e_{i_k-j_{k-1}}=e_0$, we get $i_k=j_{k-1}$ and $w_1\cdots w_k=e_0^ks^{j_k}$ as needed. Finally, the value of $w$ is   $e_0^ls^{j_{l-1}}$ so $j_{l-1}=1$. 
\end{proof}

\begin{Proposition}\label{prop:pair2rat}
    Let $R\subseteq G$ be a rational subset of a group $G$ defined by a $G$-labelled graph on $n$ vertices, and set $H=\Z\wr\Z_{n+1}$. There is a f.g. submonoid $M\le G{\times}H$ and $y\in G{\times}H$ such that $R$ is a section of  $\sbmon{y}M$.
\end{Proposition}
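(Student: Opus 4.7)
The plan is to use \Cref{prop:path_words} to encode paths in the $G$-labelled graph $\Gamma$ for $R$ directly: each edge of $\Gamma$ becomes a generator of $M$ with the appropriate $t_{ij}$ in its $H$-component, so that a product lying in $M$ has second coordinate $e_0^ls$ exactly when it traces a path $0\to 1$ of length $l$ in $\Gamma$. Left-multiplication by a suitable power of $y=(1_G,e_0^{-1})$ will then absorb the length $l$, leaving second coordinate $s$ irrespective of the path length.

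After a standard automaton normalisation I may assume $\Gamma$ has a single initial vertex and a single accepting vertex that are distinct from each other; this may require adding one extra vertex with $1_G$-labelled edges from the old accepting vertices, so the graph now has at most $n+1$ vertices. I index its vertices by $\Z_{n+1}$, placing the initial vertex at $0$ and the accepting vertex at $1$, and leaving unused elements of $\Z_{n+1}$ if needed. I then set
\[M=\sbmon{\{(g,t_{ij})\mid (i,j)\text{ is an edge of }\Gamma\text{ labelled }g\}}\le G{\times}H\]
and $y=(1_G,e_0^{-1})\in G{\times}H$, aiming to show $R=\sect{\sbmon{y}M}{s}$.

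The easy direction is forward: an element $g\in R$ arises as $g=g_1\cdots g_l$ from a path $(0=i_0,\ldots,i_l=1)$ in $\Gamma$ with labels $g_1,\ldots,g_l$, and by \Cref{prop:path_words} the product $(g_1,t_{i_0i_1})\cdots(g_l,t_{i_{l-1}i_l})$ equals $(g,e_0^ls)\in M$, so left-multiplication by $y^l$ gives $(g,s)$. The converse is where the work lies: given $(g,s)=y^km\in\sbmon{y}M$, we get $m=(g,e_0^ks)$, and I must argue that the second coordinate $e_0^ks$ forces the underlying word of generators of $M$ to spell an honest path $0\to 1$ of length $k$ in $\Gamma$. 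This is exactly what \Cref{prop:path_words} provides; and because only generators coming from edges of $\Gamma$ are available, the resulting path lies in $\Gamma$, and its label is the first coordinate $g$ of $m$, so $g\in R$. The main obstacle I expect is this rigidity step — if any product of the chosen generators landing on a second coordinate $e_0^ks$ were allowed to be something other than an encoded path, the converse would fail — which is precisely why the wreath product $\Z\wr\Z_{n+1}$ is the right ambient group rather than something simpler.
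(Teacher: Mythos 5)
Your proposal is correct and follows essentially the same route as the paper's proof: normalise the automaton to a single accepting state on $n+1$ vertices, encode each edge $(v_i,g,v_j)$ as $(g,t_{ij})$, take $y=(1_G,e_0^{-1})$, and invoke \Cref{prop:path_words} on the second coordinate for both inclusions. No gaps.
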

\begin{proof}
    Let $V$ be a finite set of states of size $n$, and $S\subseteq V{\times}G{\times}V$ the set of transitions defining $R$. In order to have a single accepting state, we add $1_G$-labelled transitions from each accepting state to a new vertex, which will become the unique accepting state. We now have $n+1$ states, which we rename so that $V=\{v_0,...,v_n\}$, $v_0$ is the initial state, and $v_1$ is the accepting state. Let $H=\Z\wr\Z_{n+1}$, $t_{ij}$ as in \Cref{prop:path_words} and define:
\begin{equation*}
        T=\{(g,t_{ij})\mid(v_i,g,v_j)\in S\}
\end{equation*}
and $y=(1_G,e_0^{-1})$. We claim $R=\sect{(\sbmon{y}\sbmon{T})}{s}$.
First we prove $R\subseteq\sect{(\sbmon{y}\sbmon{T})}{s}$. Let $g\in R$, then $g$ is the label of an $S$-path of length $l$ from $v_0$ to $v_1$. Taking the product in $\sbmon{T}$ corresponding to this path we get $(g,e_0^ls)\in \sbmon{T}$ by \Cref{prop:path_words}, therefore $g\in\sect{(\sbmon{y}\sbmon{T})}{s}$.

Conversely, if $g\in \sect{(\sbmon{y}\sbmon{T})} s$, then $(g,s)\in\sbmon{y}\sbmon{T}$ so $(g,e_0^{l'}s)\in\sbmon{T}$ for some $l'\ge 0$. Let $u_1\cdots u_l$ be the $T$-product with this value. 
Applying \Cref{prop:path_words} to the second coordinate of this product, we conclude that $l=l'$ and that the second coordinate of each $u_k$ is $t_{i_{k-1}i_k}\in H$ for some sequence $(0=i_0,...,i_l=1)$. Looking at the first coordinates of $u_k$, we get an $S$-path from the initial state $0$ to the accepting state $1$, showing $g\in R$.
\end{proof}

We are now ready to prove the main tool of this paper.
\begin{Theorem}\label{thm:mon2rat}
 Let $R\subseteq G$ be a rational subset of a group $G$. There is a virtually Abelian group $H$ and a f.g. submonoid $M\le G{\times}H$ such that $R$ is a section of $M$. Moreover, $H$ depends only on the number of vertices defining $R$.
\end{Theorem}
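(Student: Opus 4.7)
The plan is to chain \Cref{prop:pair2rat} with \Cref{prop:reduce2pair}. \Cref{prop:pair2rat} already realizes $R$ as a section, at some $s\in H_1$, of the product $\sbmon{y}\cdot M_1$ inside $G\times H_1$, where $H_1=\Z\wr\Z_{n+1}$ depends only on the number $n$ of vertices defining $R$, and both $\sbmon{y}$ and $M_1$ are finitely generated submonoids of $G\times H_1$. This is exactly the shape that \Cref{prop:reduce2pair} handles: a product of two finitely generated submonoids of an arbitrary group.

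So the first step is to apply \Cref{prop:reduce2pair} with the ambient group taken to be $G\times H_1$ (in place of $G$) and with submonoids $A=\sbmon{y}$, $B=M_1$. I would choose $H_2$ to be any virtually Abelian group satisfying the conclusion of \Cref{prop:xn_s_yn}, for instance the group constructed in the proof of \Cref{prop:xn_s_yn}. The proposition produces a finitely generated submonoid $C\le(G\times H_1)\times H_2$ together with an element $s'\in H_2$ such that $\sbmon{y}M_1=\sect{C}{s'}$.

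The second step is to collapse the two nested sections into a single one. After the identification $(G\times H_1)\times H_2=G\times(H_1\times H_2)$, one checks that for every $g\in G$,
\[(g,s,s')\in C \iff (g,s)\in\sect{C}{s'}=\sbmon{y}M_1 \iff g\in\sect{(\sbmon{y}M_1)}{s}=R,\]
so $R=\sect{C}{(s,s')}$ is a section of $C$, now viewed as a finitely generated submonoid of $G\times H$ with $H=H_1\times H_2$. A finite direct product of virtually Abelian groups is virtually Abelian, so $H$ has the required form; and since $H_1$ depends only on $n$ while $H_2$ is fixed, the ``moreover'' clause follows.

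The argument is essentially pure plumbing built on the two previous propositions, so I do not anticipate any genuine obstacle. The only point requiring a little care is the bookkeeping around the section operation, namely the observation that taking a section over the $H_2$-coordinate and then over the $H_1$-coordinate is the same as taking one section over the $(H_1\times H_2)$-coordinate, so that the final set is expressed as a section in precisely the form demanded by the theorem statement.
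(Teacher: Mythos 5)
Your proposal is correct and follows exactly the paper's own route: apply \Cref{prop:pair2rat} to realize $R$ as a section of a product of two f.g.\ submonoids of $G{\times}H_1$, then apply \Cref{prop:reduce2pair} over the ambient group $G{\times}H_1$, and collapse the two nested sections into a single section over $H_1{\times}H_2$. The bookkeeping identity $\sect{C}{(s,s')}=\sect{(\sect{C}{s'})}{s}$ that you single out is precisely the step the paper records as $M_h=\sect{(\sect{M}{h_2})}{h_1}$.
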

\begin{proof}
    This is just a combination of \Cref{prop:reduce2pair} and \Cref{prop:pair2rat}.
    First by \Cref{prop:pair2rat} we get $H_1,h_1$ and $A,B\le G{\times}H_1$ such that $(AB)_{h_1}=R$ where $A$ is cyclic. Then by \Cref{prop:reduce2pair} we get $H_2,h_2$ and $M\le G{\times}H_1{\times}H_2$ such that $M_{h_2}=AB$.
    Set $H=H_1{\times}H_2,h=(h_1,h_2)$. We get $M_h=\sect {(\sect M {h_2})} {h_1}=\sect{(AB)}{h_1}=R$. the claim follows since $H_1$ depends only on the number of vertices defining $R$,  $H_2$ is a fixed group, and both are virtually Abelian.
\end{proof}
If $R\subseteq G$ is a rational set defined by a graph on $\le n$ vertices with a single accepting state, we proved that $R$ is a section of a finite extension of $G{\times}\Z^{n+3}$. Although we will not need it, we will show more efficient construction, where any such $R$ defined on $\le n!2^n$ vertices is a section of a finite extension of $G{\times}\Z^{n+3}$. Since it will not affect our main results, the reader may skip to the next section.

We note that if $A\le Aut(\Z^n)=GL_n(\Z)$ is finite, there is $v\in\Z^n$ such that $stab_A(v)=\{Id\}$, or equivalently, $|Av|=|A|$. This is because $rank\{v\in\Z^n\mid  av=v\}<n$ for any $a\in A\setminus\{Id\}$.
\begin{Proposition}\label{prop:autZn}
    Let $A\le Aut(\Z^n)$ be finite, and let $v\in\Z^n$ have a trivial stabilizer in $A$. Define $t_{ab}=a^{-1}vb\in\Z^n\rtimes A$ for any $a,b\in A$. A word $w\in\{t_{ab}\mid a,b\in A\}^l$ has value $v^la$ for some $a\in A$ iff $w=t_{a_0a_1}t_{a_1a_2}\ldots t_{a_{l-1}a_l}$ for some $(Id=a_0,a_1,\ldots ,a_l=a)\in A^{l+1}$.  
\end{Proposition}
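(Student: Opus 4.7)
The plan is to imitate the proof of \Cref{prop:path_words}, but to replace its ad hoc manipulation of the basis vectors $e_i$ by a representation-theoretic argument that uses only an $A$-invariant inner product, so as to work for any finite $A\le Aut(\Z^n)$ with any $v$ of trivial stabilizer.

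Write $t_{ab}=(a^{-1}v,\,a^{-1}b)\in\Z^n\rtimes A$ and consider a word $w=t_{b_0c_0}\cdots t_{b_{l-1}c_{l-1}}$. Using the semidirect-product formula $(u,g)(u',g')=(u+g{\cdot}u',gg')$, a short induction shows that the prefix products have $A$-component $g_k$ satisfying $g_0=Id$ and $g_{k+1}=g_kb_k^{-1}c_k$, and $\Z^n$-component equal to $\sum_{k=0}^{l-1}h_kv$, where $h_k:=g_kb_k^{-1}\in A$. The forward direction is then routine: for $w=t_{a_0a_1}\cdots t_{a_{l-1}a_l}$ with $a_0=Id$, an induction gives $g_k=a_k$ and $h_k=Id$, so the sum telescopes to $(lv,a_l)=v^la$.

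The main obstacle is the converse: given that $w$ evaluates to $v^la=(lv,a)$, I must deduce that each $h_k$ equals $Id$. The input is the vector identity $\sum_{k=0}^{l-1}h_kv=lv$ with $h_k\in A$, which repackages as: the counting function $c(h):=\#\{k:h_k=h\}$ satisfies $\sum_{h\in A}c(h)=l$ and $\sum_{h\in A}c(h)\,hv=lv$. I will prove the key claim that these constraints force $c=l\cdot\delta_{Id}$. To do so, choose any $A$-invariant positive-definite inner product on $\R^n$, obtained by averaging a standard one over the finite group $A$. Then $\|hv\|=\|v\|$ for every $h\in A$, and Cauchy--Schwarz yields $\langle hv,v\rangle\le\|v\|^2$ with equality iff $hv=v$, iff $h=Id$; this is the only place the trivial-stabilizer hypothesis is used. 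Pairing the identity $\sum c(h)\,hv=lv$ with $v$ gives
\[
    l\|v\|^2=\sum_{h\in A}c(h)\,\langle hv,v\rangle\le\sum_{h\in A}c(h)\,\|v\|^2=l\|v\|^2,
\]
so equality is forced throughout, and hence $c(h)=0$ for every $h\ne Id$, proving the claim.

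Once $h_k=Id$ for every $k$, the recursion $g_{k+1}=h_kc_k=c_k$ together with the identity $h_k=Id\Leftrightarrow g_k=b_k$ unwinds to $b_0=g_0=Id$, $b_{k}=c_{k-1}$ for $1\le k\le l-1$, and $c_{l-1}=g_l=a$. Setting $a_0=Id$ and $a_k=c_{k-1}$ for $k\ge1$ then exhibits the path-shaped factorization $w=t_{a_0a_1}\cdots t_{a_{l-1}a_l}$ with $a_l=a$, completing the proof.
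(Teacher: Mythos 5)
Your proposal is correct and follows essentially the same route as the paper: write the $\Z^n$-component of the product as a sum of $l$ elements of the orbit $Av$, average an inner product over $A$ to make it $A$-invariant, and use the equality case of Cauchy--Schwarz together with the trivial-stabilizer hypothesis to force every summand to equal $v$, after which the path structure falls out of the $A$-components. The only difference is cosmetic (you track the semidirect-product recursion explicitly where the paper "pulls the $v$'s to the left" and then cites the unwinding argument from Proposition~\ref{prop:path_words}).
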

\begin{proof}
   It is easy to see that any word in this form has the right value. Conversely, let $w$ be a word of length $l$ with  $\val(w)=v^la$. If $w$ is any word in the $t_{ab}$'s, we can pull all $v$'s to the left, and obtain $\val(w)=v_1\cdots v_la'$ for some $v_i\in Av$ and $a'\in A$. Since $\val(w)=v^la$ we obtain $a'=a$ and $\sum_{i\le l} v_i=lv$. We take any inner product on $\Z^n$ and average over $A$, and get an $A$-invariant inner product $\langle\cdot,\cdot\rangle$ on $\Z^n$. By scaling we may assume $\langle v,v\rangle =1$. We have  $|v_i|=1$  from the $A$-invariance of the inner product. By Cauchy-Schwartz we get $\langle v_i,v\rangle\le|v_i||v|=1$, with equality iff $v_i=v$.  Therefore we get $\sum_{i\le l}\langle v_i,v\rangle\le l$, but on the other hand $\sum_{i\le l} v_i=lv$ implies $\sum_{i\le l}\langle v_i,v\rangle=\langle lv,v\rangle =l$. Therefore we get $v_i=v$ for all $i$. From this, the same argument as in \Cref{prop:path_words} shows that $w$ has the form of a path $Id\rightarrow a$ as needed.
\end{proof}
Geometrically, we showed that $v$ is not in the convex hull of $(A\setminus\{Id\})v=Av\setminus\{v\}$. As an example of a large finite subgroup $A\le Aut(\Z^n)$ one may take the group of signed permutation matrices of size $n!2^n$, where we can use $v=(1,2,...,n)$ as the vector with trivial stabilizer. 
\begin{Theorem}
    \label{thm:tighterAut}
    Let $R\subseteq G$ be a rational set defined by $n$ vertices and one accepting state, let $H$ be a group satisfying \Cref{prop:xn_s_yn}, and let $A\le Aut(\Z^n)$ a finite group, where $|A|\ge n$. Then $R$ is a section of $G{\times}(\Z^n{\rtimes}A){\times}H$. 
\end{Theorem}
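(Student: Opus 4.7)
The plan is to mimic the two-step construction of \Cref{thm:mon2rat}, but with $\Z^n\rtimes A$ replacing $\Z\wr\Z_{n+1}$ and with \Cref{prop:autZn} playing the role of \Cref{prop:path_words}. First I would realize $R$ as a section of a product of two submonoids inside $G\times(\Z^n\rtimes A)$, then invoke \Cref{prop:reduce2pair} to collapse that product into a section of a single submonoid of $G\times(\Z^n\rtimes A)\times H$.

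For the first step: since $|A|\ge n$, I can pick $n$ distinct elements $b_0=Id,b_1,\ldots,b_{n-1}\in A$ and use them as labels for the $n$ vertices of the graph defining $R$, with $b_0$ labelling the initial state and $b_1$ the accepting state. Choose $v\in\Z^n$ with trivial $A$-stabilizer (as guaranteed by the remark preceding \Cref{prop:autZn}), let $t_{ab}=a^{-1}vb$, and set
\[T=\{(g,t_{b_ib_j})\mid(v_i,g,v_j)\in S\}\subseteq G\times(\Z^n\rtimes A),\qquad y=(1_G,v^{-1}),\]
noting $y^k=(1_G,v^{-k})$ for all $k\ge 0$ since $v\in\Z^n$. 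The key claim is $\sect{\sbmon{y}\sbmon{T}}{b_1}=R$. The forward inclusion is immediate: any length-$l$ accepting path with label $g\in R$ yields, by the ``easy'' direction of \Cref{prop:autZn}, an element $(g,v^lb_1)\in\sbmon{T}$, and $y^l\cdot(g,v^lb_1)=(g,b_1)$. For the reverse, if $(g,b_1)=y^k(g_0,w)$ with $(g_0,w)\in\sbmon{T}$, then $g_0=g$ and $w=v^kb_1$. Applying \Cref{prop:autZn} to the second coordinate forces any $T$-factorization of $(g,w)$ to have length exactly $k$ and its second-coordinate factors to form a path word $t_{b_{i_0}b_{i_1}}\cdots t_{b_{i_{k-1}}b_{i_k}}$ with $b_{i_0}=b_0$ and $b_{i_k}=b_1$. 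Since $T$ only contains $t_{b_ib_j}$ coming from actual edges of the graph, this corresponds to a genuine accepting path whose $G$-label is exactly $g$, hence $g\in R$.

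For the second step, I apply \Cref{prop:reduce2pair} to the f.g.\ submonoids $\sbmon{y}$ and $\sbmon{T}$ of the ambient group $G\times(\Z^n\rtimes A)$; this furnishes the virtually Abelian $H$ of \Cref{prop:xn_s_yn} and a f.g.\ submonoid $M\le G\times(\Z^n\rtimes A)\times H$ whose $s$-section (in the $H$-coordinate) equals $\sbmon{y}\sbmon{T}$. Composing sections then gives $R=\sect{M}{(b_1,s)}$, as required. The only step with any real content is the equality $\sect{\sbmon{y}\sbmon{T}}{b_1}=R$, and the main subtlety there is that \Cref{prop:autZn} only constrains $T$-products whose second coordinate has the form $v^la$ (rather than $v^l$ alone), which is precisely why the auxiliary factor $y$ is needed to cancel the $v^l$ and pin the second coordinate down at $b_1$.
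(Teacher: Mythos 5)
Your step 2 is a correct application of \Cref{prop:reduce2pair}, but step 1 --- the equality $\sect{\sbmon{y}\sbmon{T}}{b_1}=R$ --- has a genuine gap in the reverse inclusion, and it is precisely the failure mode the paper flags in the paragraph following this theorem's proof. \Cref{prop:autZn} is an equivalence for words of length $l$ whose value is $v^{l}a$ \emph{with the same} $l$: its proof extracts $\sum_{i\le l}v_i=lv$ with $v_i\in Av$ and uses $\langle v_i,v\rangle\le1$ to force every $v_i=v$. It does not assert that a word of length $l$ cannot have value $v^{k}a$ with $k<l$; indeed it can, because $v$ lies outside the convex hull of $Av\setminus\{v\}$ but not necessarily outside the cone that set generates. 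In your decomposition $(g,b_1)=y^{k}(g_0,w)$ nothing ties $k$ to the length of the $T$-factorization of $(g_0,w)$, so the sentence ``forces any $T$-factorization of $(g,w)$ to have length exactly $k$'' is unjustified. For a concrete failure, suppose $-Id\in A$ and (unluckily) $b_1=-Id$, and let $R$ be given by a single edge $v_0\to v_1$ labelled $g$, so $R=\{g\}$ and $T=\{(g,vb_1)\}$. Then $(vb_1)^{3}=(v-v+v)\,b_1^{3}=vb_1=v^{1}b_1$, hence $y\cdot(g^{3},(vb_1)^{3})=(g^{3},b_1)\in\sbmon{y}\sbmon{T}$ although $g^{3}\notin R$ in general. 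Choosing the labels $b_i$ more carefully does not address the mechanism: for longer words the contributions $v_i\in Av$ can still partially cancel whenever they are not all equal to $v$.

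The paper's proof avoids splitting off the $v^{-1}$-powers as a separate left factor. It builds a single submonoid of $G\times(\Z^n\rtimes A)\times H$ generated by $(g,a^{-1}vb,x)$ for the edges together with $(1_G,c^{-1},s)$ and $(1_G,v^{-1},y)$, and takes the section over $(1,s)$. By \Cref{prop:xn_s_yn}, the $H$-coordinate forces any product landing in that section to have shape $x^{l}sy^{l}$: exactly $l$ edge-generators, then the $s$-generator, then exactly $l$ copies of $(1_G,v^{-1},y)$. This couples the length of the $t_{ab}$-word to the exponent of $v$, which is exactly the hypothesis \Cref{prop:autZn} needs. Any route through \Cref{prop:reduce2pair} destroys this coupling (the padding by identities introduces the generator $(1,1,y)$ that the paper explicitly warns against), which is why the product-of-two-submonoids shortcut that works for \Cref{prop:pair2rat} cannot work here and \Cref{prop:xn_s_yn} must be used directly.
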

\begin{proof}
    We use $A$ as a set of vertices defining $R$, with initial state $Id$ and a single accepting state $c\in A$, and $S\subseteq A{\times}G{\times}A$  the set of transitions. Let $x,s,y\in H$ as in \Cref{prop:xn_s_yn} and $v\in\Z^n$ with $stab_A(v)=\{Id\}$. Define:
    \[T=\{(g,a^{-1}vb,x)\mid(a,g,b)\in S\}\cup\{(1_G,c^{-1},s),(1_G,v^{-1},y)\}\]
We claim $\sect{\sbmon{T}}{(1,s)}=R$. Indeed, any $g\in R$ is given by a path of some length $l$, giving a product $(g,v^lc,x^l)$, and we have
\[(g,v^lc,x^l)(1,c^{-1},s)(1,v^{-1},y)^l=(g,1,x^lsy^l)=(g,1,s)\in\sbmon{T}\]

Conversely, if $(g,1,s)\in\sbmon{T}$, then in the last coordinate we must have $x^lsy^l$, which implies that in the second coordinate we have $\prod_{i\le l} (a^{-1}_ivb_i)c^{-1}v^{-l}=1$ or $\prod_{i\le l}a^{-1}_ivb_i=v^lc$. By \Cref{prop:autZn} we get $a_1=Id$, $a_i=b_{i-1}$ and $b_l=c$, so the first coordinate traces an $S$-path from $Id$ to $c$, implying $g\in R$.
\end{proof}
Recall that in the proof of \Cref{prop:reduce2pair} we added $1_G$ to $A_0,B_0$ to remove the limitation that the 2 products have the same lengths. In the proof of \Cref{prop:pair2rat} it turns out that we don't need those elements since both products always has the same length $l$, but they do not cause any problem. In contrast, the proof of \Cref{thm:tighterAut} would fail if we add $(1,1,y)$, since the implication  $\sum_{i\le l} v_i=l'v\rightarrow v_i=v$ may fail if $l'<l$. Geometrically, in \Cref{prop:path_words} $A$ is the group of cyclic shifts, and we use the fact that $e_0$ is not in the cone generated by $Ae_0\setminus\{e_0\}=\{e_1,...,e_n\}$, whereas in \Cref{prop:autZn} we only know that $v$ in not in the convex hull of $Av\setminus\{v\}$. This is why we can't use a product of pair of submonoids in \Cref{thm:tighterAut} and had to use \Cref{prop:xn_s_yn} directly.
\subsection{Limitations}
It is natural to ask if we can go further and find a virtually Abelian group $H$ such that any rational subset of $G$ is a section of $G{\times}H$, without the limitation on the number of states defining $R$. We explain why such a group $H$ is unlikely to exist. Assume $H$ has a normal subgroup $N\simeq\Z^d$ and $[H:N]=n$. Given a f.g.\ submonoid $M\le G{\times}H$, we can describe the section $\sect{M}{h}$ as follows: by using the standard technique proving that $\RAT(G{\times}H)\equiv_T\RAT(G{\times}\Z^d)$, we build a $G{\times}\Z^d$-labelled graph $\Gamma$ on $n$ vertices corresponding to the cosets of $H/N$. We can then define the rational subset $R\subseteq G{\times}\Z^d$  given by paths $N\rightarrow hN$, and we have $\sect M h=\sect R v$ for some $v\in\Z^d$. Equating the $\Z^d$ coordinate to $v$ gives linear constraints on the number of uses of each edge of $\Gamma$, and forbids using some edges of $\Gamma$, so they can be erased. 
It is instructive to build $\Gamma$ for the above proofs, and see how $\Gamma$ reconstructs a graph that is equivalent to the input graph. However, by its nature such a construction cannot define a rational subset on more than $n$ vertices.

We have shown that any rational subset of $G$ is a section of a f.g.\ submonoid $M\le G{\times}H$ where $H$ is virtually Abelian. The converse is not true, even if $M$ is a subgroup and $H=\Z$: if $F_2=\sbgrp{a,b}$ is the free group on 2 generators, and $M=\sbgrp{(a,0),(b,1)}\le F_2{\times}\Z$, then the section $\sect M 0$ is the subgroup of $F_2$ freely generated by $\sbgrp{a^{b^i}\mid i\in\Z}$. Since every rational set generates a f.g.\ subgroup (\cite[Corrolay 1]{gilman1987groups}), $\sect M 0$ is not rational.

\section{Decidability}
The dependence of $H$ on $R$ in \Cref{thm:mon2rat} gives rise to the following problem. Given $G$ and $n\in\N$, we define the \emph{bounded rational subset membership}, denoted $\RAT_{\le n}(G)$, as the following problem:  Given a description of a rational subset $R\subseteq G$ with at most $n$ vertices, and $g\in G$, decide if $g\in R$. We have $\RAT_{\le1}(G)\equiv_T\SUBMON(G)$ by definition. In terms of uniformity, this problem lies between the membership problem in a specific rational subset $R\subseteq G$ and the full $\RAT(G)$. 

\Cref{thm:mon2rat} easily implies the following 3 reductions, in increasing order of uniformity:
\begin{Theorem}\label{thm:3reductions}
Let $G$ be a fixed f.g.\ group.
\begin{enumerate}
    \item The membership problem in a fixed rational set $R\subseteq G$ can be reduced to the membership problem in a fixed submonoid $M\subseteq G{\times}H$ where $H$ is a virtually Abelian group.
    \item For any fixed $n$, $\RAT_{\le n}(G)\le_T\SUBMON(G{\times}H)$, where $H$ is a virtually Abelian group depending  on $n$.
    \item Let $\mcG=\{G{\times}H\mid H\text{ is virtually Abelian}\}$. The uniform rational subset membership problem in $\mcG$ is equivalent to the uniform submonoid membership problem in $\mcG$.
\end{enumerate}
\end{Theorem}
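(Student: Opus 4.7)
The plan is to derive all three items as direct applications of \Cref{thm:mon2rat}, which says that any rational subset $R\subseteq G$ is a section $\sect{M}{h}$ of some f.g.\ submonoid $M\le G{\times}H$ with $H$ virtually Abelian, and moreover that $H$ depends only on the number of vertices defining $R$. The only structural fact needed beyond the theorem itself is that a direct product of two virtually Abelian groups is again virtually Abelian.

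For (1), apply \Cref{thm:mon2rat} once to the fixed $R$ to obtain a fixed virtually Abelian $H$, a fixed f.g.\ submonoid $M\le G{\times}H$, and a fixed $h\in H$ with $R=\sect{M}{h}$. The reduction takes input $g\in G$, returns the pair $(g,h)\in G{\times}H$, and queries the $M$-oracle. For (2), the \emph{moreover} clause of \Cref{thm:mon2rat} lets us fix a single virtually Abelian $H=H(n)$ in advance. The construction in the proof of that theorem is effective: from a rational description of $R\subseteq G$ on at most $n$ vertices one computes a finite generating set for some $M_R\le G{\times}H$ together with an element $h_R\in H$, and $g\in R$ iff $(g,h_R)\in M_R$, which is a single $\SUBMON(G{\times}H)$ query.

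For (3), the reduction $\SUBMON\le_T\RAT$ on $\mcG$ is free since submonoids are rational. Conversely, given an instance $(H,R,g)$ of the uniform $\RAT$ problem on $\mcG$ — where $H$ is virtually Abelian, $R\subseteq G{\times}H$ is rational, and $g\in G{\times}H$ — apply \Cref{thm:mon2rat} with the composite group $G{\times}H$ playing the role of $G$. This yields a virtually Abelian group $H'$, a f.g.\ submonoid $M\le(G{\times}H){\times}H'$, and an element $h'\in H'$ with $R=\sect{M}{h'}$. Rewriting $(G{\times}H){\times}H'=G{\times}(H{\times}H')$ and noting that $H{\times}H'$ is virtually Abelian, this ambient group lies in $\mcG$, and the question ``$(g,h')\in M$'' is an instance of uniform $\SUBMON$ on $\mcG$.

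There is no real obstacle: \Cref{thm:mon2rat} does the heavy lifting. The only points worth checking are that the proof of \Cref{thm:mon2rat} treats its input group as a black box, so that it applies with $G{\times}H$ in place of $G$ in item (3), and that the class $\mcG$ is stable in the required sense — namely, that attaching a further virtually Abelian factor to a member of $\mcG$ keeps it inside $\mcG$, which is precisely closure of virtual Abelianness under direct products.
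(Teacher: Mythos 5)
Your proposal is correct and matches the paper's intent: the paper gives no explicit proof, stating only that \Cref{thm:mon2rat} ``easily implies'' the three reductions, and your derivations (direct application for (1), the \emph{moreover} clause plus effectiveness for (2), and applying the theorem to $G{\times}H$ with closure of virtual Abelianness under direct products for (3)) are exactly the intended argument.
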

For the following results, we note that in all the above results we can always choose the virtually Abelian group $H$ to have an Abelian normal subgroup $N$ such that $H/N\simeq \Z_4{\times}\Z_{2^k}$ for some $k$. This is because we can always increase $n$ in \Cref{prop:pair2rat} to a power of $2$. We now prove our first theorem on the behaviour of $\SUBMON$ under finite extensions.
\begin{Theorem}
    There is no algorithm that takes as input a description of a virtually nilpotent group $G$, a subgroup $H<G$ of index 2, and an instance of the submonoid membership problem in $G$, and solves it using oracle access to the submonoid membership problem in $H$.
\end{Theorem}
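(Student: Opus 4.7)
The plan is to assume, for a contradiction, that such an algorithm $\mathcal{A}$ exists, and to convert $\mathcal{A}$ into a decision procedure for $\RAT(G_0)$, where $G_0$ denotes Bodart's nilpotent group of class $2$ with $\SUBMON(G_0)$ decidable but $\RAT(G_0)$ undecidable \cite{bodart2024membership}. This would contradict the undecidability of $\RAT(G_0)$.

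Given an instance $(R, g)$ of $\RAT(G_0)$, I would first invoke \Cref{thm:mon2rat}, strengthened by the remark preceding \Cref{thm:3reductions} (take $n$ in \Cref{prop:pair2rat} to be a power of $2$), to produce a virtually Abelian group $V$ with an Abelian normal subgroup $N$ of $2$-power index $2^m$ such that $V/N \simeq \Z_4 \times \Z_{2^k}$ (with $m = k+2$), a f.g.\ submonoid $M \le G_0 \times V$, and an element $h \in V$ satisfying $g \in R$ iff $(g, h) \in M$. Since $V/N$ is a $2$-group, I would then pick a chain $V = V_0 \triangleright V_1 \triangleright \cdots \triangleright V_m = N$ of subgroups, each step of index $2$, obtained by lifting index-$2$ subgroups of $V_i / N$ back to $V_i$. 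Setting $G^{(i)} := G_0 \times V_i$, each $G^{(i)}$ is virtually nilpotent (nilpotent $G_0$ times virtually Abelian $V_i$), and $G^{(i+1)}$ is index $2$ in $G^{(i)}$.

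Iteratively invoking $\mathcal{A}$ along this chain would yield a Turing reduction from $\SUBMON(G_0 \times V)$ to $\SUBMON(G_0 \times N)$. Since $N$ is free Abelian, $G_0 \times N$ is still nilpotent of class $2$, and I would complete the argument by showing that Bodart's minimality-based proof of $\SUBMON(G_0)$ decidability extends (uniformly in $N$) to $\SUBMON(G_0 \times N)$ — this is presumably what the phrase ``our main argument elaborates on their argument'' in the introduction alludes to. Combining this base case with the iterated reduction via $\mathcal{A}$ would decide $\SUBMON(G_0 \times V, (g, h))$, hence $g \in R$, yielding the desired contradiction.

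The hard part will be this base case: extending Bodart's minimality argument from $\SUBMON(G_0)$ to $\SUBMON(G_0 \times N)$ uniformly in free Abelian $N$. The chain construction and the iterative invocation of $\mathcal{A}$ are essentially bookkeeping once that extension is secured; the conceptual content sits entirely in the base-case decidability.
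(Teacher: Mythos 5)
Your reduction pipeline (apply \Cref{thm:mon2rat} with the $2$-power refinement, then walk down an index-$2$ chain from $V$ to its free Abelian normal subgroup $N$ using the hypothetical algorithm) matches the first half of the paper's proof. The genuine gap is your base case. You need $\SUBMON(G_0{\times}\Z^d)$ to be decidable \emph{uniformly in $d$}, because $d$ grows with the number of vertices of the input rational subset $R$; without uniformity you only conclude that each $\RAT_{\le n}(G_0)$ is decidable, which is perfectly compatible with $\RAT(G_0)$ being undecidable, so no contradiction follows. But uniform decidability in $d$ is exactly what is \emph{not} available: Bodart's minimality argument (see \Cref{thm:dim_gain}) gives, for each fixed $d$, a reduction of $\SUBMON(N_{2,m}{\times}\Z^d)$ to finitely many problems $\RAT(N_{2,m'}{\times}\Z^{d'})$ with $m'<m$, each decidable by minimality of $m$ --- but that is a separate, non-constructive existence statement for each parameter, and no single algorithm covering all $d$ is produced. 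Indeed, \Cref{thm:weirdland} is precisely about the scenario in which every member of such a family is decidable while no uniform algorithm exists; your base case, if proved unconditionally, would essentially settle the very issue the paper is circling.

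The paper avoids needing any such base case by not stopping at $\SUBMON(G{\times}\Z^r)$: it applies Bodart's Theorem A to reduce that problem to instances of $\RAT(K)$ for nilpotent $K$ with $h([K,K])<h([G,G])$, and then repeats the whole cycle ($\RAT$ $\to$ $\SUBMON$ of a virtually Abelian extension $\to$ descend by index $2$ via the hypothetical algorithm $\to$ $\RAT$ of a smaller group). The Hirsch length of the derived subgroup strictly decreases, so the recursion bottoms out at virtually Abelian groups, where $\RAT$ is uniformly decidable. This yields a single algorithm deciding $\RAT(G)$ for every nilpotent $G$, contradicting the existence of a nilpotent group with undecidable $\RAT$ (e.g.\ via \cite{roman2023undecidability}). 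In short, the hypothetical algorithm must be re-used at every level of the recursion to supply the uniformity your base case lacks; fixing a single $G_0$ and hoping for an unconditional uniform base case will not close the argument.
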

\begin{proof}
    If such an algorithm existed, we could decide $\RAT(G)$ uniformly for all nilpotent groups, which would contradict e.g.\ \cite{roman2023undecidability}, as follows: Given an instance of $\RAT(G)$, reduce it to $\SUBMON(G{\times}H)$ where $H$ is Abelian by finite-Abelian-2-group by \Cref{thm:3reductions}. Then use the hypothetical algorithm recursively, to reduce to subgroups of index $2$ until we reach an instance of  $\SUBMON(G{\times}\Z^r)$. Then apply \cite[Theorem A]{bodart2024membership} to reduce to instances of $\RAT(K)$ for a nilpotent group $K$ satisfying $h([K,K])<h([G{\times}\Z^r,G{\times}\Z^r])=h([G,G])$, where $h(\cdot)$ is the Hirsch length, and repeat recursively. Eventually we get an instance of $\RAT(A)$ where $A$ is virtually Abelian, which is decidable.
\end{proof}

Let $N_{2,m}$ be the free nilpotent group of class $2$. For the next theorem we recall a result from \cite{bodart2024membership}: 
\begin{Theorem}
\label{thm:dim_gain}
\cite[Corollary 2.8]{bodart2024membership}
Fix $m,n\in\N$. Then $\SUBMON(N_{2,m}{\times}\Z^n)\le_T\bigcup_{G\in\mcG}\RAT(G)$  where $\mcG=\{N_{2,m'}{\times}\Z^{n'}\mid m'<m, \binom{m'}{2}+n'\le \binom{m}{2}+n\}$ is a finite class of groups.
\end{Theorem}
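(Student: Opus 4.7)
The quantity $\binom{m}{2}+n$ is the Hirsch length of the center of $N_{2,m}\times\Z^n$, because $N_{2,m}$ is class-$2$ nilpotent with center coinciding with its commutator subgroup $\Z^{\binom{m}{2}}$. So the theorem asserts that $\SUBMON$ in $N_{2,m}\times\Z^n$ reduces to $\RAT$ in a group with strictly smaller free-nilpotent rank, at the cost of possibly enlarging the abelian factor but not the total central Hirsch length. I would proceed by induction on $m$, the base case $m=0$ being trivial since $\SUBMON(\Z^n)$ reduces tautologically to $\RAT(\Z^n)$.

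For the inductive step, fix finite generators $g_1,\ldots,g_s$ of $M\le N_{2,m}\times\Z^n$ and a target $g_*=(x_*,y_*)$, and coordinatize each $g_i=(x_i,y_i)$ with $x_i\in\Z^{m+n}$ the abelianization and $y_i\in\Z^{\binom{m}{2}}$ the central coordinate. Multiplication satisfies $(x,y)(x',y')=(x+x',\,y+y'+B(x,x'))$ for a bilinear form $B$ vanishing on the $\Z^n$ summand. A product $g_{i_1}\cdots g_{i_\ell}$ equals $g_*$ iff the multiplicity vector $\vec k\in\N^s$ satisfies $\sum_i k_i x_i=x_*$ and some ordering consistent with $\vec k$ contributes the right cross-term sum $\sum_{j<j'}B(x_{i_{j'}},x_{i_j})=y_*-\sum_i k_i y_i$. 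My plan is to pick a ``pivot'' generator $g_1$ with nonzero abelianization (if none exists, $M$ is central and the problem is already abelian) and absorb its contribution into the other generators via a basis change in $\Z^m$. Each cross-term $B(x_1,x_j)$ for $j\neq 1$ then gets recorded in one new central coordinate, of which there are at most $m-1$ independent ones. The residual instance thus lives in $N_{2,m-1}\times\Z^{n'}$ with $n'\leq n+(m-1)$, matching the budget $\binom{m-1}{2}+n'\leq\binom{m}{2}+n$ via the identity $\binom{m}{2}-\binom{m-1}{2}=m-1$.

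The main obstacle is to show that the reduction produces a single $\RAT$ instance in the smaller group, rather than a $\SUBMON$ instance --- otherwise the induction only peels off one generator before getting stuck. The extra flexibility needed, namely to interleave the absorbed $g_1$-factors with the other generators in any order, should be encoded by an automaton whose states track the running count of consumed $g_1$-copies modulo a suitable lattice, and whose transitions record incremental cross-term corrections into the newly introduced central coordinates. This is exactly the mechanism of \Cref{prop:pair2rat}: unambiguous tracking of ``how many times each generator has been used'' upgrades a monoid product into a rational expression. A secondary point to verify is that iterating the reduction only ever produces pairs $(m',n')$ with $m'<m$ and $\binom{m'}{2}+n'\leq\binom{m}{2}+n$, of which only finitely many exist, giving the finite family $\mcG$ as stated.
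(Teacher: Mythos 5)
First, note that the paper offers no proof of this statement: it is imported verbatim as \cite[Corollary 2.8]{bodart2024membership}, so there is no in-paper argument to compare against; your attempt has to be judged against the mechanism of the cited source.

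Your plan has a genuine gap at exactly the point you flag as the ``main obstacle,'' and the mechanism you propose to close it cannot work. Peeling off a pivot generator $g_1$ requires encoding, for each other generator $g_j$, the total commutator correction $c_j\,[x_1,x_j]$ where $c_j$ is the number of inversions between $g_1$-letters and $g_j$-letters. For fixed multiplicities $(k_1,k_j)$ the achievable values of $c_j$ form the interval $[0,k_1k_j]$; already for $M=\sbmon{\{a,b\}}\le H_3(\Z)=N_{2,2}$ the residual condition is ``does there exist $(k_1,k_2,c)$ with $k_1,k_2$ matching the abelianization, $c$ matching the central coordinate, and $0\le c\le k_1k_2$,'' and the constraint $c\le k_1k_2$ is not semilinear, hence not expressible as membership in a rational subset of $\Z^{1+n'}$. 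An automaton tracking the count of consumed $g_1$-copies ``modulo a suitable lattice'' loses precisely the information needed, since the per-transition correction is (exact running count)${}\times[x_1,x_j]$ --- that bilinear update \emph{is} the Heisenberg multiplication you are trying to eliminate, so no finite-state bookkeeping over $N_{2,m-1}\times\Z^{n'}$ can absorb it. The identity $\binom{m}{2}-\binom{m-1}{2}=m-1$ makes the dimension count come out right but carries no content here.

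The missing idea in \cite{bodart2024membership} is a dichotomy on the cone $C$ generated by the abelianized generators, not an induction peeling one generator at a time. If $C$ is a linear subspace (the monoid is a group up to the usual adjustments), membership reduces to subgroup membership plus easy computations, and the quadratic constraint above becomes vacuous because the full interval of commutator values is attained. Otherwise $C$ admits a supporting functional $\phi\ge 0$, positive on some generator; then the number of occurrences of the $\phi>0$ generators in any product representing $g$ is bounded by $\phi(\bar g)/\min\phi$, so $M\cap\phi^{-1}(\phi(\bar g))$ is a finite union of sets $M_0t_1M_0\cdots t_kM_0$ with $M_0$ generated by the $\phi=0$ generators. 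This is what genuinely produces a $\RAT$ instance, and it is the passage to the subgroup generated by the face generators --- whose abelianization has strictly smaller torsion-free rank --- that yields $m'<m$ and the budget $\binom{m'}{2}+n'\le\binom{m}{2}+n$. Without some version of this case split, the induction you describe cannot get off the ground.
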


Our evidence for \Cref{conj:main} is the following:
\begin{Theorem}
\label{thm:weirdland}
    If \Cref{conj:main} is false, both of the following statements are true:
    \begin{enumerate}
        \item There is a group $G$ such that the $\RAT_{\le n}(G)$ is decidable for every $n$, but  $\RAT(G)$ is undecidable.
        \item There is a group $L$ such that for every $n$, $\SUBMON(L{\times}(\Z\wr\Z_{2^n}))$ is decidable, but there is no single algorithm taking $n$ as input deciding submonoid membership in all of them uniformly.
    \end{enumerate}
\end{Theorem}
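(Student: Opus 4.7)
The plan is to exhibit a single virtually nilpotent group $L$ with $\RAT(L)$ undecidable but $\SUBMON(L{\times}\Z^d)$ decidable for every $d$, and then use $G=L$ for part (1) and $L'=L{\times}H_2$ (where $H_2$ is the fixed virtually Abelian group of \Cref{prop:reduce2pair}) for part (2). The existence of $L$ and the decidability of $\SUBMON(L{\times}\Z^d)$ will be unconditional; \Cref{conj:main} being false is invoked only to pass from Abelian factors to the virtually Abelian factors $H_2$ and $\Z\wr\Z_{2^n}$.

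To construct $L$ I would apply a minimality argument to \Cref{thm:dim_gain}. Let $m_0$ be the smallest integer for which there exists $d$ with $\RAT(N_{2,m_0}{\times}\Z^d)$ undecidable, and let $d^*$ be the smallest such $d$. Such $m_0$ exists because $H_3(\Z)^k$ is a quotient of $N_{2,2k}$ and $\RAT(H_3(\Z)^k)$ is undecidable for large $k$ by \cite{roman2023undecidability}, and undecidability transfers upward along surjections. Put $L=N_{2,m_0}{\times}\Z^{d^*}$. Then $\RAT(L)$ is undecidable by construction, and for every $d$ \Cref{thm:dim_gain} reduces $\SUBMON(L{\times}\Z^d)=\SUBMON(N_{2,m_0}{\times}\Z^{d^*+d})$ to finitely many $\RAT(N_{2,m'}{\times}\Z^{n'})$-problems with $m'<m_0$; each such problem is decidable by minimality of $m_0$, so $\SUBMON(L{\times}\Z^d)$ is decidable for every $d$.

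For part (1), set $G=L$ and fix $n$. By \Cref{thm:3reductions}(2) together with the remark preceding the first decidability theorem, $\RAT_{\le n}(G)\le_T \SUBMON(G{\times}H_n)$, where $H_n$ is virtually Abelian with an Abelian normal subgroup $\Z^{r_n}$ of $2$-power index. Since that quotient is a finite $2$-group it admits a composition series with index-$2$ links, producing a finite chain of index-$2$ inclusions from $L{\times}\Z^{r_n}$ up to $G{\times}H_n$. Assuming \Cref{conj:main} is false, $\SUBMON$-decidability lifts along each link of the chain, so $\SUBMON(L{\times}\Z^{r_n})$ decidable (established above) implies $\SUBMON(G{\times}H_n)$ decidable, and hence $\RAT_{\le n}(G)$ decidable. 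Since $n$ was arbitrary while $\RAT(G)=\RAT(L)$ is undecidable, (1) follows. Part (2) is analogous with $L'=L{\times}H_2$: for each $n$, the group $L'{\times}(\Z\wr\Z_{2^n})$ has an Abelian normal subgroup of $2$-power index of the form $L{\times}\Z^{s_n}$, so the same index-$2$ lifting gives decidability of $\SUBMON(L'{\times}(\Z\wr\Z_{2^n}))$ for each fixed $n$. If this were uniform in $n$, then for any rational $R\subseteq L$ on at most $m$ vertices, picking $n$ with $2^n\ge m+1$ and combining \Cref{prop:pair2rat} and \Cref{prop:reduce2pair} (padding the graph to $2^n$ vertices, as noted after \Cref{prop:reduce2pair}) would realise $R$ as a section of a f.g.\ submonoid of $L{\times}(\Z\wr\Z_{2^n}){\times}H_2\cong L'{\times}(\Z\wr\Z_{2^n})$, and feeding this instance to the hypothetical uniform algorithm would decide $\RAT(L)$ uniformly, contradicting undecidability.

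The main obstacle is pinning down $L$ precisely enough that \Cref{thm:dim_gain} terminates at decidable $\RAT$-problems in a single application: the minimality argument achieves this because \Cref{thm:dim_gain} is a one-step reduction \emph{within} the family $\{N_{2,m}{\times}\Z^n\}$, so its outputs with $m'<m_0$ sit in the already-decidable regime by the very definition of $m_0$. A secondary subtlety is that the negation of \Cref{conj:main} supplies only non-constructive existence of algorithms for index-$2$ supergroups; this is precisely what makes the per-$n$ decidability in (2) compatible with the non-existence of a uniform algorithm, since the algorithms provided by the negation of \Cref{conj:main} for different $n$ need not assemble into a single machine.
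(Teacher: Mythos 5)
Your proposal is correct and follows essentially the same route as the paper: the same minimal choice of $N_{2,m}\times\Z^r$, the same one-step application of \Cref{thm:dim_gain} to get unconditional decidability of $\SUBMON$ on the nilpotent-by-free-abelian base, the same index-$2$ chain through the $2$-group quotient to lift decidability under the negation of \Cref{conj:main}, and the same uniformity contradiction for part (2). The only cosmetic differences are a relabelling of the groups ($G$ vs.\ $L$) and that you derive the existence of the minimal undecidable $N_{2,m}\times\Z^r$ from \cite{roman2023undecidability} via quotients rather than citing \cite{konig2016knapsack} directly.
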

\begin{proof}
As in \cite{bodart2024membership}, define $m,r$ be such that $\RAT(N_{2,m}{\times}\Z^r)$ is undecidable and $m$ is minimal, and set $G=N_{2,m}{\times}\Z^r$ (such $m,r$ exist by \cite{konig2016knapsack}).  Let $H$ be the $\Z^3$-by-$\Z_4$ group from the proof of \Cref{prop:xn_s_yn}. Fix $n$ and set $K=G{\times}H{\times}(\Z\wr\Z_{2^n})$. By \Cref{thm:3reductions}, $\RAT_{\le n}(G)\le_T\SUBMON(K)$ (since $n+1\le 2^n$). We note that $K$ has a normal subgroup $N\simeq N_{2,m}{\times}\Z^{r+2^n+3}$ such that $K/N\simeq \Z_4{\times}\Z_{2^n}$. By minimality of $m$ and \Cref{thm:dim_gain}, $\SUBMON(N)$ is decidable. We can easily build a series $N=N_0<N_1\cdots <N_{n+2}=K$ with $[N_i:N_{i-1}]=2$. Assuming \Cref{conj:main} is false, we get by induction that $\SUBMON(N_i)$ is decidable, therefore $\SUBMON(K)$ and $\RAT_{\le n}(G)$ are decidable, proving the first claim. 
Now, setting $L=G{\times}H$, we have shown that $\SUBMON(L{\times}(\Z\wr\Z_{2^n}))$ is decidable for all $n$, but if there was an algorithm deciding it uniformly (or even for infinitely many cases), we could apply the reduction $\RAT_{\le n}(G)\le_T\SUBMON(L{\times}(\Z\wr\Z_{2^n}))$ uniformly to decide $\RAT(G)$, contradiction.
\end{proof}

While the conclusions of \Cref{thm:weirdland} are very interesting, we do not see it as evidence to believe that they hold, but rather that they are a result of our inability to give a definite proof of \Cref{conj:main}. A likely way to settle \Cref{conj:main} is to find a group $G$ with a fixed undecidable rational subset, such that $\SUBMON(G{\times}\Z^r)$ is decidable for every $r$. In this case, \Cref{thm:3reductions} shows that for some virtually Abelian $H$, $G{\times}H$ has a fixed submonoid with undecidable membership problem, and it has a finite index subgroup with decidable uniform submonoid membership problem.

\printbibliography
\end{document}